\documentclass[12pt]{amsart}

\usepackage{amsmath}
\usepackage{amssymb}
\usepackage{amsthm}
\usepackage{amsfonts}
\usepackage{graphicx}
\usepackage{color}
\usepackage{mathrsfs}

\newtheorem{thm}{Theorem}[section]


\providecommand{\D}{\Delta}

\providecommand{\e}{\varepsilon}

\providecommand{\s}{\sigma}


\newcommand{\sG}{\mathscr{G}}

\begin{document}

\author{Thomas M. Lewis}
\address{Furman University,
Greenville, SC, USA}
\email{tom.lewis@furman.edu}

\subjclass[2010]{05C10}

\thanks{I would like to dedicate this article to  Stephen  Hedetniemi,  
not  only for introducing  me to the  Hamiltonian  number  problem,  
but for his unfailing encouragement along the way.
}

\title{On the Hamiltonian Number of a Planar Graph}

\keywords{Hamiltonian cycle, Hamiltonian walk, Hamiltonian number, Hamiltonian  spectrum,  Grinberg's theorem,  planar  graph}

\maketitle

\begin{abstract}
The Hamiltonian number  of a connected  graph is the minimum  of the 
lengths of the closed, spanning  walks in the graph.  In 1968, 
Grinberg published a necessary condition for the existence of 
a Hamiltonian cycle in a planar  graph,  formulated in terms  
of the lengths  of its face cycles.  We show how Grinberg's 
theorem  can be adapted to provide a lower bound  on the
Hamiltonian number  of a planar graph.
\end{abstract}

\section{Introduction}
A \emph{Hamiltonian  cycle} in a graph  is a closed, spanning  walk that  
visits each vertex exactly once; a graph is called \emph{Hamiltonian} 
provided that  it contains a Hamiltonian  cycle. 
While not every graph is Hamiltonian,  
every connected graph contains a closed, 
spanning walk. A closed, spanning walk of minimum 
length is called a \emph{Hamiltonian  walk}, and 
the \emph{Hamiltonian  number}  of a connected
graph $G$, denoted  by $h(G)$,  
is the length of a Hamiltonian  walk in $G$.  
Thus the  Hamiltonian  number  of a graph  can be understood  
as a measure  of how far the graph deviates from being Hamiltonian.

In 1968, Grinberg  \cite{Grinberg68} published  a necessary condition 
for the existence of a Hamiltonian  cycle in a planar  graph, formulated 
in terms of the lengths of its face cycles. The sole purpose of this paper 
is to show how Grinberg's theorem can be adapted  to provide a lower 
bound on the Hamiltonian number of a planar  graph. 
Before we state  this  theorem,  it is helpful to place our work in context.

In general, determining  the  Hamiltonian  number  of a graph  is difficult, 
but  for a connected graph $G$ of order $n$, the elementary  bounds
\[
n \le h(G) \le 2(n - 1)
\]
are easily obtained.  A Hamiltonian  walk on $G$ must visit 
each vertex, which gives the lower bound.  
On the other hand, a pre-order, closed, spanning walk 
on a spanning tree of $G$ has length $2(n - 1)$, 
yielding the upper bound.  Over the  years,  
much  of the  research  on the  Hamiltonian  number  
has advanced along two fronts:  developing tighter  
bounds for the Hamiltonian  number  in terms  of natural 
graph parameters, or evaluating  the Hamiltonian  numbers 
of some special graphs or families of graphs.

Goodman and Hedetniemi \cite{GoodmanEtAl74}
initiated  the study of the Hamiltonian number 
of a graph.  They proved, among other things, properties  of 
Hamiltonian walks, upper and lower-bounds for the Hamiltonian 
number of a graph,  and a formula for the Hamiltonian  number 
of a complete $n$-partite graph.  Their most  accessible result  
is this:  let  $G$ be a $k$-connected  graph  on $n$  vertices with 
diameter $d$; then
\[
h(G) \le 2(n - 1) - \lfloor k/2 \rfloor (2d - 2),
\]
which improves the elementary  upper bound.  
In another  result, they related the Hamiltonian numbers 
of $G$ and $G-U$ , the graph  obtained  by 
deleting the unicliqual points of G.

Soon after the publication  of the seminal paper of Goodman and 
Hedetniemi, Bermond \cite{Bermond76} published a theorem on the Hamiltonian  
number problem inspired  by Ore's theorem.   Ore's theorem  gives 
a sufficient condition for a graph to be Hamiltonian  in terms 
of the sums of the degrees of non-adjacent vertices;  
see, for example, Theorem 6.6 of 
\cite{Chartrand12}. Bermond showed the following: let $G$ be 
a graph of order
$n$ and let $c \le n$; if $\deg(v) + \deg(w) \ge c$ 
for every pair of non-adjacent vertices
$v$ and $w$ in $V (G)$, then  $h(G) \le 2n - c.$

Chartrand, Thomas,  Zhang,  and  Saenpholphat 
\cite{ChartrandEtAl04} introduced  
an alternative  approach  to the  Hamiltonian  number.   
Let $G$ be a connected  graph of order $n$.  
Given vertices $u$ and $v$,  let $d(u, v)$ denote the length 
of a shortest path  from $u$ to $v$.  
A cyclic ordering  of the  vertices  of $G$ is a permutation 
$s : v_1 , v_2, \ldots , v_n , v_{n+1}$  of $V (G)$,  where $v_{n+1}=v_1$.  
Given a cyclic ordering $s$, let $d(s) = \sum_{i=1}^n d(v_i, v_{i+1})$.
The set
\[
H (G) = \{ d(s) : \text{$s$ is a cyclic ordering of $V (G)$} \}
\]
is called  the  \emph{Hamiltonian spectrum} of $G$.  
Chartrand and  his  colleagues showed that   
$h(G)  = \min H (G).$    
This  paper  contains  two  other  notable results:  
first, that  a connected  graph $G$ of order $n$ satisfies 
$h(G) \le 2(n - 1)$ with equality  if and only if $G$ is a tree; 
second, that  for each integer $n \ge 3$, every integer in the 
interval  $[n, 2(n - 1)]$ is the Hamiltonian  number of some 
graph of order $n$. Kr{\'a}l, Tong, and Zhu \cite{KralEtAl06} 
and Liu \cite{Liu11} conducted additional research on the 
Hamiltonian spectra of graphs.

Various authors  have studied  the Hamiltonian  number  of 
special graphs and families of graphs.  Punnim  and Thaithae 
\cite{ThaithaeEtAl08, PunnimEtAl10}
studied  the Hamiltonian numbers of cubic graphs.  
A graph of order $n$ with Hamiltonian  number $n+1$ is called 
\emph{almost Hamiltonian}.  
Punnim,  Saenpholphat, and Thaithae \cite{PunnimEtAl07}
characterized  the  almost  Hamiltonian  cubic graphs  and  
the almost  Hamiltonian  generalized Petersen  graphs.  
Asano, Nishizeki, and Watanabe \cite{AsanoEtAl80, AsanoEtAl83}
established  a simple upper bound for the Hamiltonian  number of 
a maximal planar graph  of order $n  \ge 3$ and  created  an  
algorithm  for finding closed, spanning  walks in  a  graph  
with  length  close to  its  Hamiltonian  number. 
G.~Chang,  T.~Chang,  and  Tong \cite{ChangEtAl12}
studied  the  Hamiltonian  numbers  of M{\"o}bius double loop networks.
The Hamiltonian  number  problem has a variety  of cognates:  
Vacek \cite{Vacek91, Vacek92}
analyzed open Hamiltonian  walks; 
Araya and Wiener \cite{WienerEtAl11a, WienerEtAl11b}
investigated hypohamiltonian graphs;  
Goodman,  Hedetniemi,  and Slater 
\cite{GoodmanEtAl75} studied  the the Hamiltonian  
completion problem; T. Chang and Tong \cite{ChangEtAl13}
considered the Hamiltonian 
numbers of strongly connected digraphs; and, Okamoto,  Zhang, and 
Saenpholphat \cite{OkamotoEtAl06, OkamotoEtAl08}
studied  the upper traceable  numbers of graphs.

\section{The Grinberg number of a planar graph}

Let $G$ be a planar  graph  and  let the  faces 
(including  the  unbounded  component) 
be labeled $F_1 , F_2, \ldots , F_N$. 
Given a face $F$, let $|F|$ denote its length, 
that  is, the number of its edges (or vertices).  
Let $\sG (G)$ be the set of all sums of the form
\[
\left| \sum_{i=1}^N \e_i (|F_i|-2) \right|
\]
where $(\e_1 , \e_2, \ldots , \e_N ) \in  \{1, +1\}^N$, 
excluding the two cases where all of the
components  have the same sign.  
Let $g(G) = \min \sG (G).$  will call $\sG (G)$
the Grinberg set of $G$ and $g(G)$ the Grinberg number  
of $G$. To develop a feel for this, consider the graph 
presented in Figure \ref{fig:honeycomb}. 
This graph has five interior faces, 
each a hexagon, and the exterior face has 18 edges. 
The Grinberg set for this graph is $\{4, 12, 20, 28\}$ 
and the Grinberg number is 4.

\begin{figure}[ht.]
\def\svgwidth{.5\textwidth}
\centering
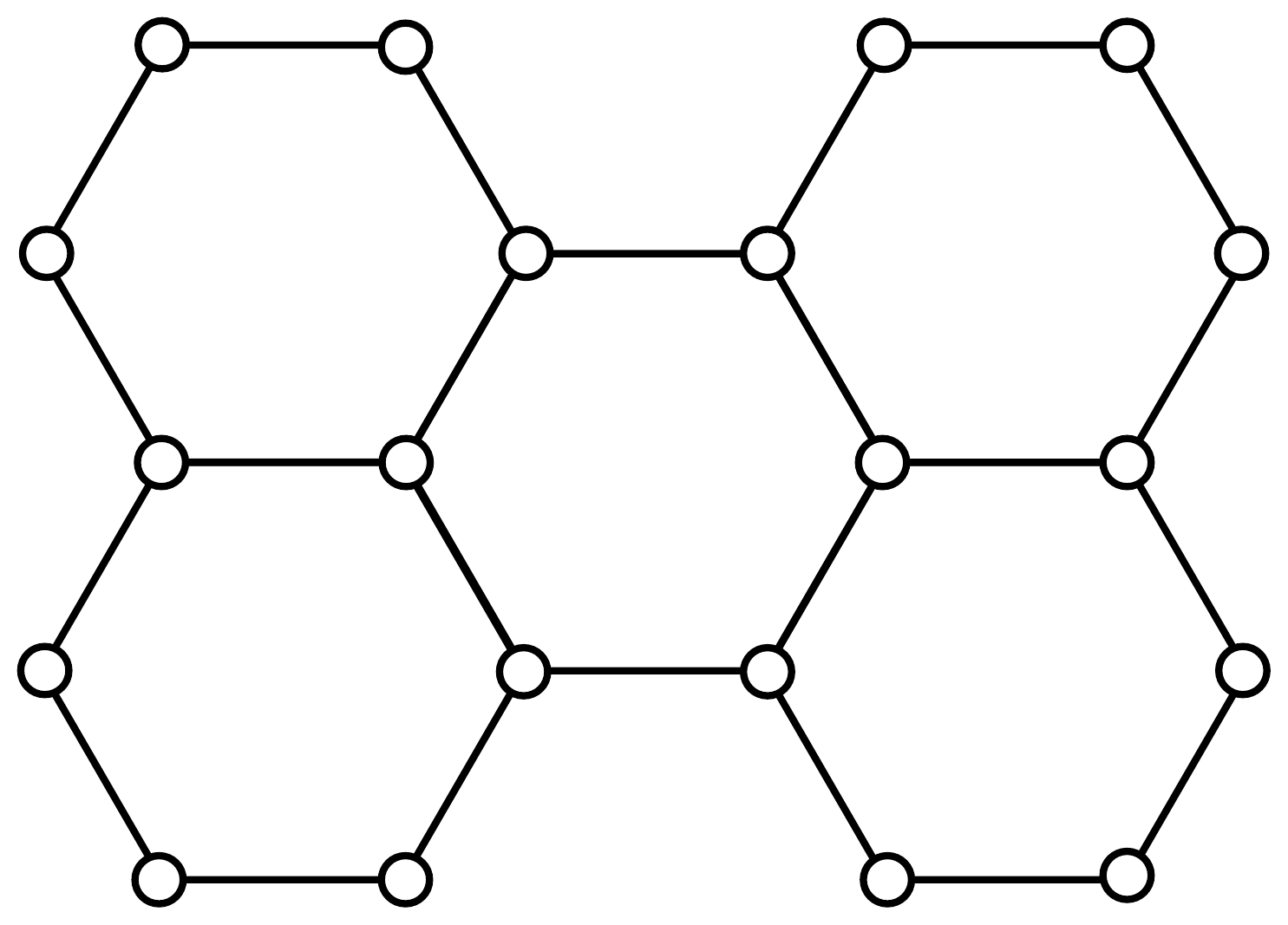
\caption{A graph with Grinberg set $\{4, 12, 20, 28\}$
an Grinberg number 4.}
\label{fig:honeycomb}
\end{figure}

Grinberg's theorem can be stated as follows:
if a planar graph $G$ is Hamiltonian,  
then $g(G) = 0$; see \cite{Grinberg68}. 
Our  main result  can be seen as a natural extension  
of Grinberg's theorem.

\begin{thm}\label{thm:Grinberg}
Let G be a planar  graph. 
A closed, spanning walk on G must contain 
$g(G)/2$ repeats of vertices.
\end{thm}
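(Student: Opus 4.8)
The plan is to recast the claim as an upper bound on the Grinberg number. Write $\ell$ for the length of a given closed, spanning walk $W$ and $n$ for the order of $G$, so that the number of repeated vertices is $r=\ell-n$. It then suffices to produce a single sign vector $(\e_1,\dots,\e_N)\in\{-1,+1\}^N$, not all of one sign, with $\bigl|\sum_{i=1}^N \e_i(|F_i|-2)\bigr|\le 2r$, since this forces $g(G)\le 2r$, i.e.\ $r\ge g(G)/2$. The sign vector will come from a face bipartition induced by $W$. To build it, I would first replace $W$ by the connected, spanning sub-multigraph $H$ of $G$ whose edges are those traversed by $W$, each taken with multiplicity equal to its number of traversals. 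Then $H$ is connected, spans $V(G)$, has every vertex of even degree, and $|E(H)|=\ell$; conversely $W$ is an Euler tour of $H$. I embed $H$ in the sphere using the embedding inherited from $G$, drawing repeated edges as parallel arcs bounding length-two ``bigon'' faces.

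Next I would record the structural facts. Being connected and even, $H$ is bridgeless and Eulerian, so its faces admit a proper $2$-colouring in which each edge-copy separates a black face from a white face. Because $H$ is connected, every face of $H$ is a disk; because $H$ spans $V(G)$, no vertex of $G$ lies in the interior of a face of $H$; and since the edges of $H$ are edges of $G$, each face of $G$ lies entirely in a single face of $H$, while bigons contain no face of $G$. Euler's formula gives the face count
\[
|E(H)|-n+2=r+2 .
\]
I then colour the faces, let $A$ be the faces of $G$ lying in black faces of $H$ and $B$ those in white faces, and set $\e_i=+1$ on $A$ and $\e_i=-1$ on $B$.

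The heart of the argument is the identity $\sum_{F_i\in A}(|F_i|-2)=\ell-2B^{\#}$, where $B^{\#}$ is the number of black faces of $H$. To prove it I would sum, over each black face $R$ of $H$, the quantity $\sum_{F_i\subseteq R}(|F_i|-2)$: for a disk $R$ with boundary walk of length $|\partial R|$ and no interior vertices this equals $|\partial R|-2$ by a direct Euler count, while each bigon contributes $2-2=0$. Summing, and using that each edge-copy of $H$ borders exactly one black face (bridgelessness) so that $\sum_{R\text{ black}}|\partial R|=\ell$, yields the identity. Writing $P=\sum_{F_i\in A}(|F_i|-2)$ and noting $\sum_{i=1}^N(|F_i|-2)=2n-4$, one gets $\bigl|\sum_i \e_i(|F_i|-2)\bigr|=2\,|P-(n-2)|=2\,|r-2(B^{\#}-1)|$, which is at most $2r$ because $1\le B^{\#}\le r+1$. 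Hence $g(G)\le 2r$ whenever $A$ and $B$ are both nonempty.

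It remains to treat the case when one colour class carries no face of $G$; this occurs exactly when every edge of $G$ has even multiplicity in $H$, so every traversed edge is used at least twice. As the traversed edges span and connect $G$, this forces $\ell\ge 2(n-1)$ and $r\ge n-2$, and since any admissible sign vector gives a Grinberg value at most $\sum_i(|F_i|-2)=2n-4$, we obtain $g(G)\le 2n-4\le 2r$. Either way $r\ge g(G)/2$. I expect the main obstacle to be the bookkeeping behind the identity $\sum_{F_i\in A}(|F_i|-2)=\ell-2B^{\#}$: verifying the disk formula $|\partial R|-2$ when $H$ fails to be $2$-connected, so that boundary walks repeat vertices and edges, and confirming that the bigons created by repeated edges contribute nothing. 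These are precisely the points at which planarity and the spanning hypothesis must be used with care.
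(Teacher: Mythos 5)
Your proposal is correct and takes essentially the same route as the paper: your multigraph $H$ is exactly the paper's reduction $G'$, and the proper $2$-colouring of its faces, the Euler-formula face count $r+2$, and the chord-insertion identity $(|A|-2)=(|A_1|-2)+(|A_2|-2)$ all coincide with the paper's argument, while your identity $P=\ell-2B^{\#}$ combined with $\sum_{i=1}^{N}(|F_i|-2)=2n-4$ is algebraically equivalent to the paper's relation $\bigl|\sum_i \varepsilon_i(|F_i|-2)\bigr|=2|\Delta|$ with $\Delta$ the difference of the two colour counts. Your one real addition is the closing case analysis: when every traversed edge has even multiplicity, all faces of $G$ land in one colour class, the resulting sign vector is all of one sign and hence inadmissible in the definition of $\mathscr{G}(G)$ --- a degenerate case the paper's proof silently passes over --- and you correctly patch it with $\ell\ge 2(n-1)$ and $g(G)\le 2n-4\le 2r$.
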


\begin{proof} Our proof is an adaptation of the customary proof
of Grinberg's theorem; see, for example, Theorem 18.2 of \cite{Bondy08}.
Let the vertices of the planar  graph $G$ be labeled 
$\{v_1 , v_2, . . . , v_n\}$ and let $\s$ be a closed, 
spanning walk on $G$, given as a list of these vertices.  
First we will produce a new planar  multigraph  $G'$, 
called the \emph{reduction} of $G$, relative to $\s$. 
The reduced graph is created  through  applications  
of the following procedures:
\begin{description}
\item[Edge removal] Remove any edge from $G$ that  was not 
traversed  by $\s$.
\item[Edge duplication] If an edge of $G$ is traversed  more than 
once by $\s$, then create an additional  edge in $G'$ for each additional  traversal of this edge by this walk.
\end{description}
Thus, while the reduced graph $G'$ is still planar,  
it may no longer be simple. The faces of $G'$  
are labeled with a $+$ or a $-$ sign as follows: 
the unbounded component is marked  $+$;  
thereafter, if a face of $G'$  is adjacent to 
(shares an edge with)  a $+$ region, then  it is marked $-$, 
and if a face of $G'$ is adjacent to a $-$ region, then  it is 
marked $+$. An example of a planar  graph  and its reduction 
relative to a closed, spanning walk is presented 
in Figure \ref{fig:GReduce}.

\begin{figure}
\def\svgwidth{.9\textwidth}
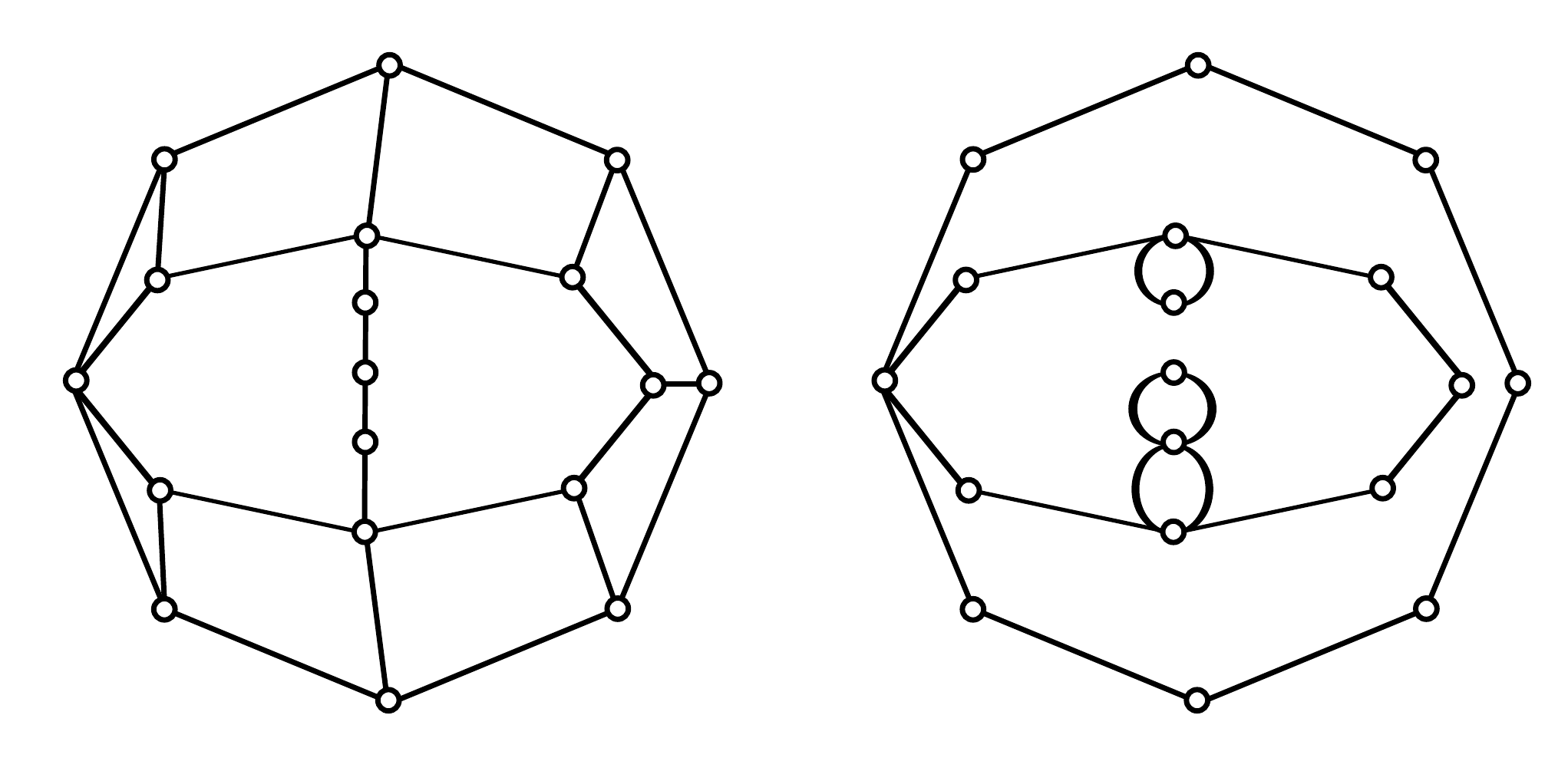
\caption{A  planar  graph (left) and  its  reduction (right) based on the 
closed, spanning  walk $a, b, c, d, e, f, g, h, a, i, j, r, j, k, l, m, 
n, p, q, p, n, o, a$. Notice that the reduced graph has 4 repeated vertices and 6 faces.}
\label{fig:GReduce}
\end{figure}

There is a simple relationship between the number of faces of $G'$
and the number  of repeats  of vertices  in the  closed, spanning  walk $\s$. 
For  each $i$, $1 \le i \le n$, let $m_i$ count the number of times vertex 
$v_i$  is repeated  in the walk $\s$.
To be clear about this, if the vertex $v_i$ 
appears only once in the walk $\s$, then $m_i = 0$.  
Let $\Phi$ count the number  of faces of the reduced graph $G'$.  
By way of the Euler characteristic formula, we will show that
\begin{equation}\label{eq:FaceRepeats}
\Phi = 2 + \sum_{i=1}^n m_i.
\end{equation}
The degree of the vertex $v_i$ is $2m_i + 2$ and therefore the number 
of edges of $G'$ is half of the sum of the degrees taken over 
all vertices, that  is $\sum_{i=1}^n (m_i+1).$
Thus
\[
n - \sum_{i=1}^n (m_i+1) + \Phi = 2,
\]
from which formula (\ref{eq:FaceRepeats}) follows.

Our argument now moves into a second phase,  
culminating  in a simple formula relating  the 
lengths  of the positively and negatively  signed faces of $G'$.  
Let the faces of $G'$  be divided according to their signs and let 
$\{A^+  : 1 \le n_+ \}$
be the labels for the positively signed faces and let
$\{A^- : 1 \le n_- \}$
be the labels for the negatively signed faces. Since each edge of $G'$
is adjacent to a positively and a negatively signed face, it follows that
\[
\sum_{i=1}^{n_+} |A^+| = \sum_{i=1}^{n_-} |A^-|.
\]
Let $\D = n_- - n_+$, the difference between the number of negatively
and positively signed faces of $G'$. Then 
\begin{equation}\label{eq:SignedFaceDiff}
\left| \sum_{i=1}^{n_+} (|A^+|-2) - \sum_{i=1}^{n_-} (|A^-|-2)
\right| = 2 | \Delta|.
\end{equation}

We will modify this formula to incorporate  the faces of $G$. 
Let the faces of $G$ be 
labeled $\{F_i  : 1 \le i \le N\}$.  
Each face of $G$ is contained  by a unique face of $G'$.  
For each $i$, $1 \le i \le N$, let $\e_i$  be sign of the face of $G'$
that  contains  $F_i$. We will show that
\begin{equation}\label{eq:SignedFaceDiff2}
\left| \sum_{i=1}^N \e_i ( |F_i| - 2) \right| = 2 | \D |.
\end{equation}
To verify this  claim, we will follow Grinberg's strategy:  
we will add to $G'$ , one at a time, those edges of G that  
were not traversed  by $\s$. Such an edge must  split  a face of 
$G'$  into  two sub-faces, each with  the  same sign as the parent face.  
For the sake of argument,  let us say that  a face labeled $A_i^+$  is
divided by an edge of $G$ into two sub-faces, labeled $A_{i_1}^+$
and $A_{i_2}^+$.  Since the
two sub-faces share exactly one edge, we have
\[
|A_i^+| - 2 = (|A_{i_1}^+|-2) + (|A_{i_2}^+| - 2).    
\]                
Hence we can substitute $(|A_{i_1}^+|-2) + (|A_{i_2}^+| - 2)$
for $|A_i^+| - 2$ in equation (\ref{eq:SignedFaceDiff}) and retain equality.   
We continue  this  process until all of these edges have been
added. We have almost  arrived  at  equation 
(\ref{eq:SignedFaceDiff2}). The only difference corresponds  to those 
faces of $G'$ that  were created  because an edge was traversed  
more than  once by $\s$, the closed, spanning  walk.  
Such a face has only two edges and thus  contributes 0 in the sum.  
In this way, we have transformed  equation (\ref{eq:SignedFaceDiff}) 
into equation (\ref{eq:SignedFaceDiff2}). 

Our proof is nearly complete.  
By the definition of the Grinberg number
of a graph and equation (\ref{eq:SignedFaceDiff2}),
\[
g(G) \le \left| \sum_{i=1}^N \e_i ( |F_i| - 2) \right| = 2 | \D |
\]
Recall that $n_-$ and $n_+$ count the number of negatively and positively 
signed faces in $G'$, respectively, and each 
of these must be at least 1. Since they sum to 
$\Phi$, it must be that  $|\Delta| \le \Phi- 2$. 
Bearing in mind equation (\ref{eq:FaceRepeats}), we obtain
\[
\frac{1}{2} g(G) \le \sum_{i=1}^n m_i,
\]
as was to be shown.
\end{proof}

\section{Some examples}

In this section we present several examples of Theorem \ref{thm:Grinberg}.
\begin{enumerate}
\item The graph in Figure \ref{fig:honeycomb} 
has Grinberg number 4; thus, any closed, spanning walk 
on this graph must have two repeats of vertices.  
It is easy to find a closed, spanning walk with two repeats 
and thus this is optimal.
\item Consider the simple grid graph presented in Figure 3. 
The exterior face has 8 edges; each of the four interior  
faces has 4 edges. The Grinberg number of this  graph  
is 2, and any  closed, spanning  walk on this graph must contain  
at least one repeated vertex. It is easy to 
find an example of a closed, spanning  walk with one repeated vertex.
\begin{figure}[htb]
\centering
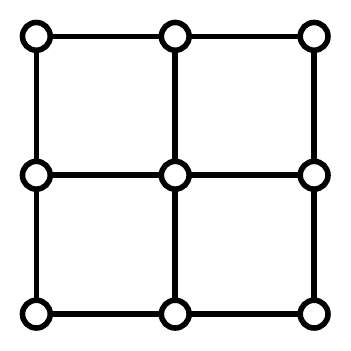
\caption{A simple grid graph with Grinberg number 2.}
\label{fig:GridGraph}
\end{figure}
\item The Grinberg  number  of a tree can be computed  by 
duplicating  each edge and computing the Grinberg number 
of the resulting planar graph. The altered  version of the 
tree in Figure \ref{fig:tree} has an outside face with 20 edges and 10 
interior faces with 2 edges each. The Grinberg number of this 
tree is 18 and any closed, spanning walk on this tree 
must contain at least 9 repeats  of vertices.
\begin{figure}
\centering
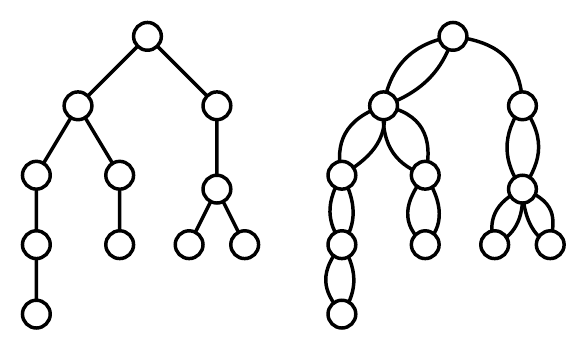
\caption{A tree and its alteration.}
\label{fig:tree}
\end{figure}

\item The graph presented  in Figure \ref{fig:14Simple} has an 
outside face with 26 edges and three  interior  faces, 
each with 14 edges.  The  Grinberg  number  is 12; thus,  
any  closed, spanning  walk on this  graph  must  
have  at  least  6 repeats.  
It is easy to find a closed, 
spanning  walk with 6 repeats.  
For example, begin at $a$ and walk around  
the outside of the graph,  ending at $a$.   
Now augment  
the  walk by adding  $a, b, c, d, c, b, a, e, f, g, f, e, a$. 
This walk has 6 repeats  of vertices:  $a$ (twice), $b$, $c$, $e$, and $f$.
\begin{figure}[htb]
\def\svgwidth{.5\textwidth}
\centering
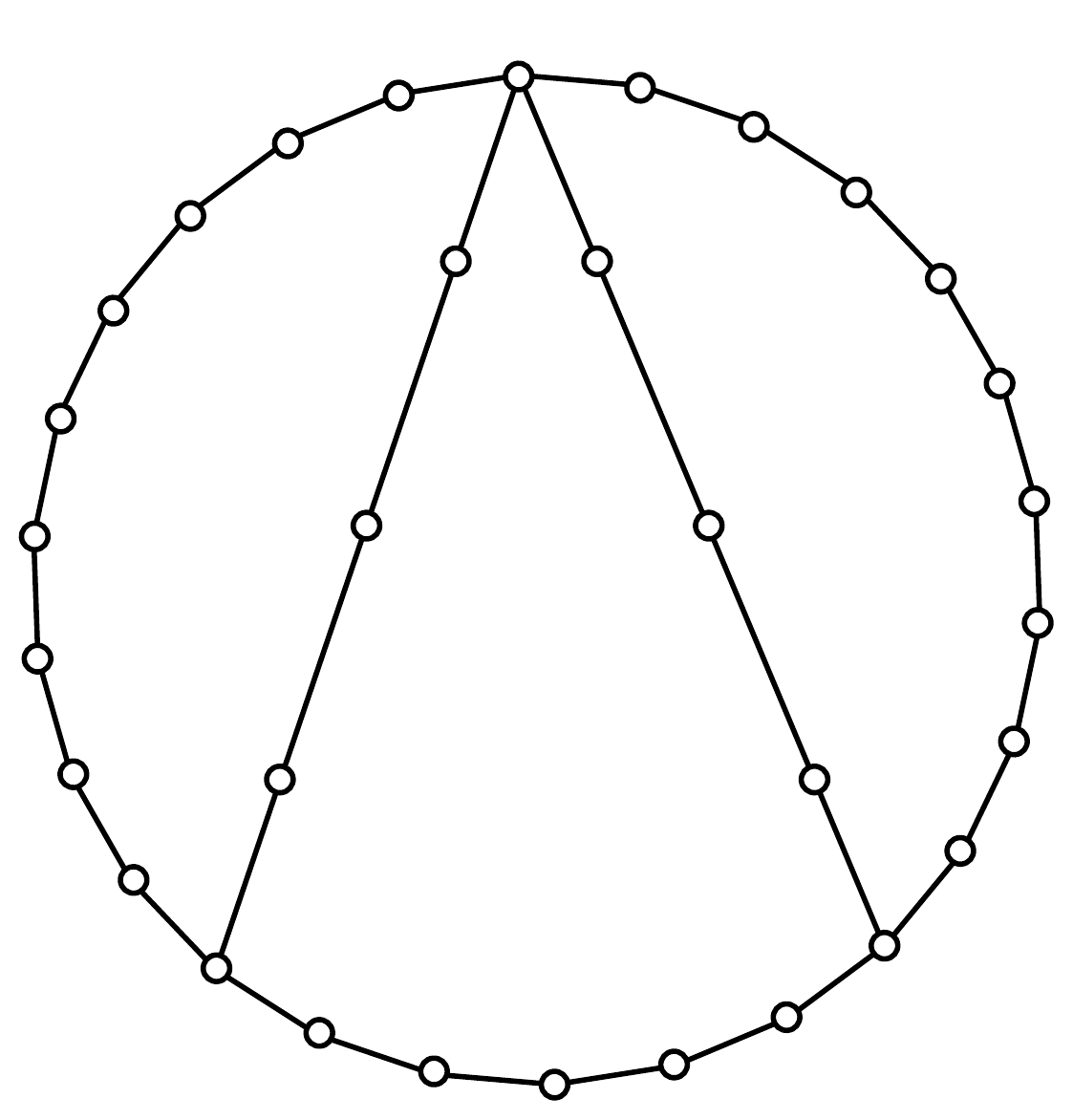
\caption{A graph with Grinberg number 12. A closed, spanning walk on this
graph must have at least 6 repeats of vertices.}
\label{fig:14Simple}
\end{figure}

\item Figure \ref{fig:58Simple} exhibits a graph with 8 
interior faces, each an octagon, and an exterior  
face with 20 edges.  The Grinberg  
number  of this  graph  is 6. Any closed, spanning  
walk on this graph must  have at least 3 repeats of vertices.  
A closed, spanning  walk with exactly 3 repeats  is pictured in the figure.
\begin{figure}[htb]
\def\svgwidth{.7\textwidth}
\centering
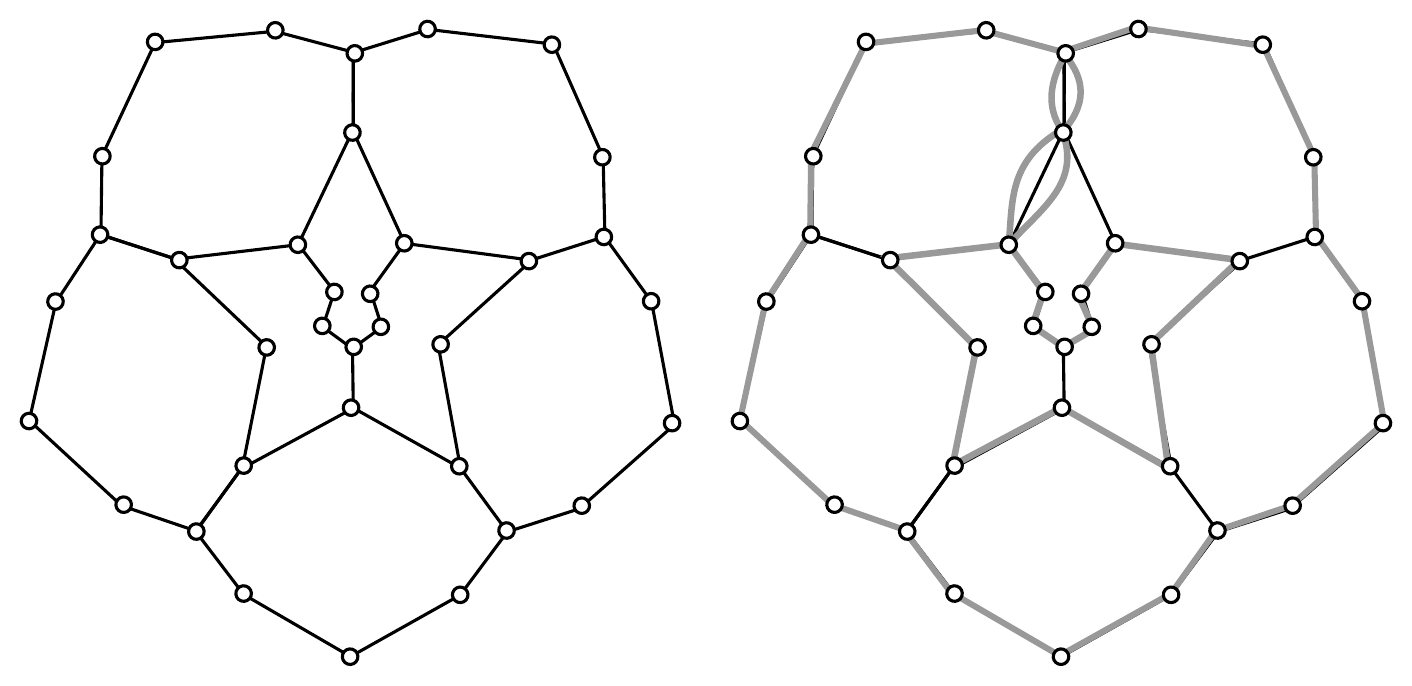
\caption{The graph on the left has Grinberg number 6.
A Hamiltonian walk on this graph (with 3 repeats) is shown on the right.}
\label{fig:58Simple}
\end{figure}

\end{enumerate}

\section{Additional observations}

Given a closed, spanning walk on the planar graph $G$, 
let $\rho$ count the number of repeats of vertices,
let $\nu = n^--1$, and let $\pi=n_+-1$.
In other words, $\nu$ is one less than the number of negatively signed
regions in $G'$ and $\pi$ is one less than the number of its 
positively signed regions in $G'$.

Since the reduced graph must  have at least one of each type 
of each signed region, $\nu$ and $\pi$ are nonnegative.  
By equations (\ref{eq:FaceRepeats}) and (\ref{eq:SignedFaceDiff2}), 
there exists an element $f \in \sG (G)$ for which
\begin{align*}
f 				&= \nu + \pi	\\
\frac{1}{2} f 	&= \max \{\nu,\pi\} - \min \{\nu, \pi \}
\end{align*}
In particular, this shows that
\[
\rho = \frac{1}{2} f + 2 \min\{ \nu, \pi \}.
\]
This can be a helpful observation  when searching for Hamiltonian walks.  
For example, consider a planar graph $G$ with 8 interior faces, 
each an octagon, and  an exterior  face with  20 edges.  
The  Grinberg  set for this graph is $\{6, 18, 30, 42, 54\}$;
hence, the number of repeats of vertices in a 
Hamiltonian  walk on G must be at least 3 and odd.  
The graph pictured in Figure \ref{fig:58Simple} has a Hamiltonian
walk with the minimal number 
of repeats, while the graph pictured in Figure \ref{fig:Octo}
has a Hamiltonian walk that requires 5 repeats.
\begin{figure}[htb]
\centering
\includegraphics[scale=.4]{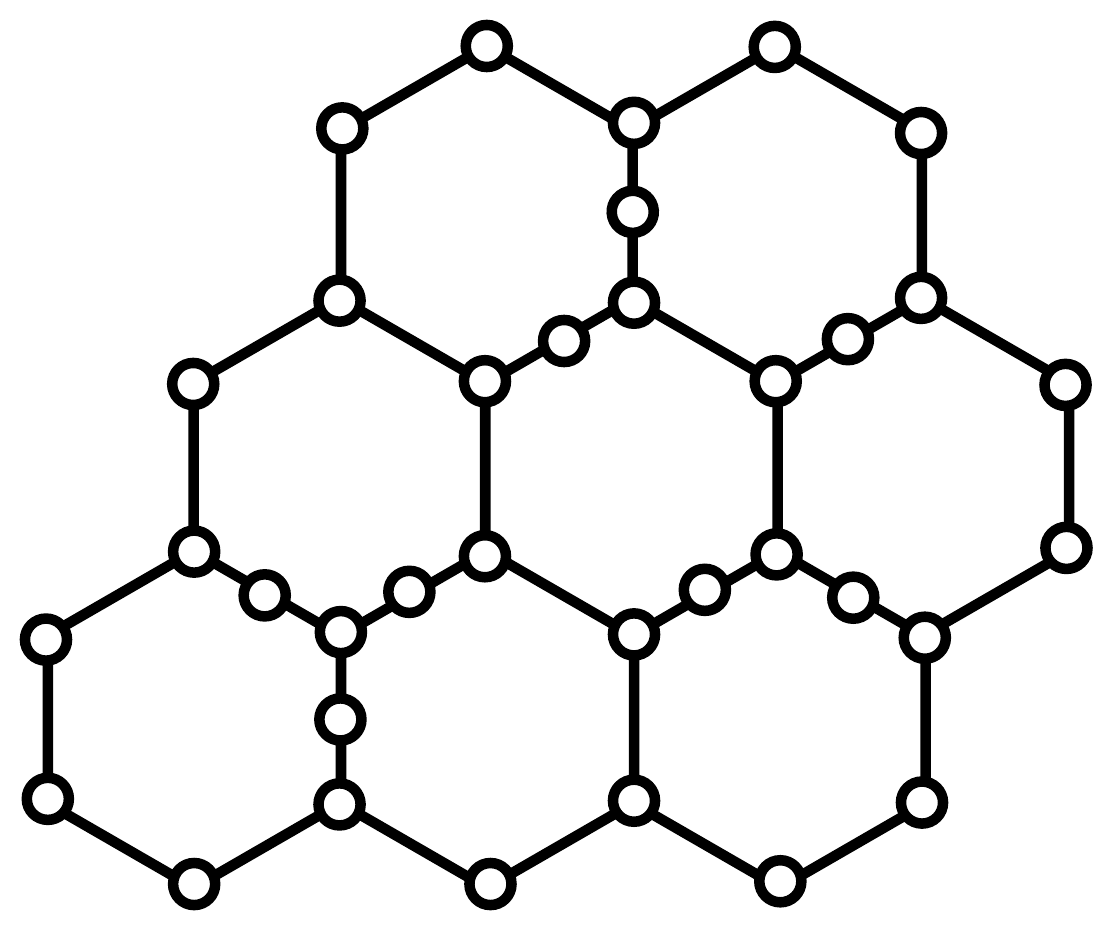}
\caption{A graph, composed of octagons, that  has a Hamiltonian  walk with
5 repeats.}
\label{fig:Octo}
\end{figure}

\bibliography{HamNum}{}
\bibliographystyle{plain}
\end{document}